\documentclass[11pt,a4paper]{article}

\usepackage[T1]{fontenc}
\usepackage{lmodern}
\usepackage[margin=1in]{geometry}
\usepackage{amsmath,amssymb,amsthm}
\usepackage{booktabs}
\usepackage{microtype}
\usepackage[colorlinks=true,linkcolor=blue,citecolor=blue,urlcolor=blue]{hyperref}
\hypersetup{pdftitle={A Peierls bound for planar soft-stick percolation},pdfauthor={Shitao Chen}}

\newtheorem{theorem}{Theorem}
\newtheorem{lemma}[theorem]{Lemma}
\newcommand{\Z}{\mathbb Z}
\newcommand{\PP}{\mathbb P}
\newcommand{\cl}{\mathcal C}
\newcommand{\tr}{\operatorname{tr}}
\newcommand{\eps}{\varepsilon}
\title{A Peierls bound for planar soft-stick percolation}
\author{Shitao Chen}
\date{}

\begin{document}
\maketitle

\begin{abstract}
In planar soft-stick percolation, each vertex of the square lattice independently
opens one uniformly chosen outgoing arrow and, with probability $\eps$, the
opposite arrow as well. Motivated by the question on its critical parameter
raised by B\"aumler et al., we prove that $\eps_c\leq0.99<1$, establishing
percolation below the two-arrow endpoint. More precisely, at $\eps=0.99$
the origin has an infinite forward cluster with probability greater than
$11/20$. The proof uses a Peierls argument in which boundary turns are
encoded by a three-state transfer matrix to bound the contour sum.
\end{abstract}

\section{Introduction and main result}

The soft-stick model interpolates between a one-arrow environment and a
lattice of horizontal and vertical two-arrow sticks. Percolation at the
two-arrow endpoint is known. To study parameters below this endpoint, we use
the geometry of the outer boundary of a finite forward cluster, accounting
for dependencies between arrows leaving the same vertex.

Independently for each $x\in\Z^2$, choose $A_x\in\{H,V\}$ uniformly.
Conditional on $A_x$, open both arrows on that axis with probability
$\eps\in[0,1]$. With probability $\delta:=1-\eps$, open just one of them,
uniformly. Let $\cl(0)$ be the set of vertices reachable from the origin by
directed open paths, and set
\begin{equation}\label{eq:definitions}
\theta(\eps):=\PP_\eps\bigl(|\cl(0)|=\infty\bigr),
\qquad
\eps_c:=\sup\{\eps\in[0,1]:\theta(\eps)=0\}.
\end{equation}

\begin{theorem}\label{thm:main}
For planar soft-stick percolation,
\[
\theta\left(\frac{99}{100}\right)>\frac{11}{20}.
\]
In particular, $\eps_c\leq 0.99<1$.
\end{theorem}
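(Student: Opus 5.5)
We will bound $\PP_\eps(|\cl(0)|<\infty)$ by a Peierls contour sum and show that at $\eps=99/100$ this sum is smaller than $9/20$. Let $\cl=\cl(0)$ be finite. Consider the union of closed unit squares centred at the vertices of $\cl$ (equivalently, work on the dual lattice), and let $\gamma$ be the outer boundary of its unbounded complement. This is a simple closed dual circuit surrounding the origin. Each dual edge of $\gamma$ separates a vertex $x\in\cl$ (inside) from a neighbour $y\notin\cl$ (outside). Since $\cl$ is forward-closed, the arrow $x\to y$ must be closed. The constraint therefore lives entirely at the inner vertices adjacent to $\gamma$. For each such vertex $x$, let $D(x)$ be the set of directions in which $\gamma$ blocks $x$, i.e.\ the directions $e$ with $x+e$ outside and the edge $\{x,x+e\}$ crossed by $\gamma$. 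The event requires that no arrow out of $x$ in a direction of $D(x)$ is open. Distinct vertices are independent, so
\[
\PP(\gamma\text{ is the outer contour})\le\prod_{x}p(D(x)),
\]
where $p(D)$ is the probability that every arrow in $D$ is closed at one vertex.

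The single-vertex probabilities are computed from the model. Let $|D|=1$. The arrow in that direction is open when $A_x$ is on its axis (probability $1/2$) and either both arrows open ($\eps$) or this one is the one chosen ($\delta/2$). It is therefore closed with probability $1-\tfrac12(\eps+\delta/2)=\tfrac12+\tfrac{\delta}{4}$.
\begin{itemize}
\item If $D$ consists of two perpendicular directions, the vertex must choose one axis and then pick only the non-blocked arrow. This has probability $2\cdot\tfrac12\cdot\tfrac{\delta}{2}=\tfrac{\delta}{2}$.
\item If $D$ consists of two opposite directions, the probability is $\tfrac12$.
\item If $|D|\ge3$, the probability is at most $\tfrac{\delta}{4}$.
\end{itemize}
At $\delta=1/100$, straight boundary segments cost about $1/2$ per edge, while every convex corner costs a factor of order $\delta$. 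A circuit around the origin must have at least four convex corners; more precisely, the number of convex turns exceeds the number of concave turns by exactly four. This is what makes the contour sum small.

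To sum over circuits, encode $\gamma$ as a closed walk of dual steps with turn sequence in $\{L,S,R\}$ (left, straight, right). Each step is assigned weight $w_S=\tfrac12+\tfrac\delta4$. A convex turn adds a corner vertex whose probability is $\delta/2$ in place of two single-edge factors, and a concave turn adds a vertex that is not blocked twice. The difficulty is that a vertex's factor depends on several consecutive boundary edges, so the weight is not a product over steps. We handle this with a three-state transfer matrix $T$ whose state records the previous turn type, so that the correct weight of each inner vertex is charged exactly once. Care is needed for vertices touched by non-consecutive parts of $\gamma$, such as thin necks. There we simply bound $p(D)$ by the product of its contributions, or by $1$, which only increases the sum.

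The number of circuits of length $n$ surrounding the origin is then at most $n\cdot\tr(T^n)$, where the factor $n$ bounds the choice of starting point along a horizontal ray. We must also use the constraint that the winding number is $+1$ (four net convex turns). This is done by attaching a formal variable $z$ to turns and extracting the appropriate coefficient, or more crudely by a weight $\lambda^{\#R-\#L}$ with $\lambda$ optimized. This gives
\[
\PP(|\cl(0)|<\infty)\le\sum_{n\ge4} n\,\tr(T_\lambda^n)\,\lambda^{-4}.
\]
Provided the spectral radius $\rho(T_\lambda)<1$ at $\delta=1/100$, the series is an explicit geometric-type sum. We then check numerically, with rigorous bounds on the eigenvalues of a $3\times3$ matrix, that it is below $9/20$, which gives $\theta(99/100)>11/20$. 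The claim $\eps_c\le0.99$ follows immediately, since $\theta$ is positive at $0.99$.

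We expect the main obstacle to be making the transfer-matrix weights rigorous: correctly attributing each vertex factor (especially concave corners and vertices with $|D|\ge2$ coming from distant parts of $\gamma$) so that the product of per-step weights genuinely dominates $\PP(\gamma)$. A second concern is that long straight runs at weight about $1/2$ combined with three turn choices do not push $\rho$ above $1$. We would first test whether suppressing immediate backtracking and charging $\delta$ per convex corner already gives $\rho<1$. If it does not, we would enlarge the state to distinguish runs, keeping the matrix at $3\times3$ by merging symmetric states.
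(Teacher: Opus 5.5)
\textbf{There is a genuine gap: charging each boundary step $w_S=r$ is not a valid upper bound.} Charging every boundary step $w_S=r=\tfrac12+\tfrac\delta4$ does not dominate $\prod_x p(D(x))$. The problem is vertices with two \emph{opposite} blocked directions, i.e.\ width-one corridors. There $p(D)=\tfrac12$, roughly twice $r^2$. Your remedy, bounding $p(D)$ by the product of its contributions, goes the wrong way. Bounding it by $1$ cannot be built into a transfer matrix, because the two dual edges at such a vertex can be arbitrarily far apart in the turn word. For example, take the contour around a horizontal segment of $N$ vertices. Its true weight is $(\delta/4)^2\,2^{-(N-2)}$, while your product is of order $(\delta/4)^2\,4^{-(N-2)}$. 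Since opposite pairs are invisible locally, every edge must be charged $\alpha=1/\sqrt2\ge r$, so that an opposite pair costs exactly $\alpha^2=\tfrac12$. Combined with the identities $\ell=n_1+2n_o+2n_a+3n_3$, $c=n_a+2n_3$ and $n_3=m$ (the number of cyclic $LL$'s), this gives the paper's bound $\PP_\eps(E_\Gamma)\le\alpha^{\ell+m}\delta^{c-m}$.

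\textbf{With this correction, the rest of your outline no longer works as stated;} the repairs are exactly the paper's remaining ingredients.
\begin{enumerate}
\item \emph{The tilt is indispensable.} Untilted, $S$ and $R$ together weigh $2\alpha>1$. The tilt must be $z^{c-k}$ with $z>1$ and prefactor $z^{-4}$. With your right-minus-left exponent and $\lambda=1/z<1$, the prefactor must be $\lambda^{4}$, not $\lambda^{-4}$.
\item \emph{You must forbid $LLL$, so the states must track runs of $L$.} Requiring $\alpha(1+z^{-1})<1$ forces $z>1+\sqrt2$, and then a second consecutive left turn weighs $\alpha^2z>1$. A chain that remembers only the previous turn has this as its diagonal $L\to L$ entry. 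So with states recording the previous turn type, $\rho(T)\ge1$ for every $z$: either the non-$L$ states alone give $\alpha(1+z^{-1})\ge1$, or the $L\to L$ entry exceeds $1$. Forbidding $LLL$ is legitimate: three consecutive left turns close a dual unit square, simplicity forces the whole circuit to be that square, and its boundary event has probability $0$. The states must then record the length $0,1,2$ of the current run of $L$'s.
\item \emph{Your root factor $n$ is too weak for the stated constant.} With these weights, $z^{-4}\sum_\ell\ell\,\tr(T^\ell)\approx0.87$ even at the best $z$ (near $11/2$). That gives only roughly $\theta(0.99)>0.12$, not $>\tfrac{11}{20}$. The paper halves this by rooting at the nearest crossing $u$ of the positive horizontal axis. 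The circuit also crosses the negative axis at some $-v$, so $\ell\ge2(u+v)>2u$. This leaves at most $\ell/2$ roots and yields $\tfrac{8}{14641}\cdot820<\tfrac{9}{20}$.
\end{enumerate}
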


The model is the soft regime of the northsouth-eastwest model in Coupier,
Henry, Jahnel and K\"oppl~\cite{CHJK}, with parameter $p=(1+\eps)/4$.
Their Lemma~2.7 states $\theta_{1/2}^{\mathrm{ns\text{-}ew}}=1$, which is
$\theta(1)=1$ in our parameterization.
B\"aumler et al.~\cite[Section~3.3]{BJKLRT} ask about the critical soft-stick
parameter and explain that the comparison considered there does not yield
a percolation conclusion for this model. Theorem~\ref{thm:main} gives an
explicit upper bound for this critical parameter, showing that percolation
occurs below the two-arrow endpoint.

Coupier et al.~\cite{CHJK} also prove strict critical-parameter bounds for
the directed-corner and unrestricted two-neighbor models, whose local arrow
distributions differ from that considered here. Their numerical study
in~\cite[Section~2.3.4]{CHJK} suggests $p_c^{\mathrm{ns\text{-}ew}}$ around
$0.42$, corresponding to $\eps_c$ around $0.68$.

\newpage
\section{Closed boundaries and local probabilities}

We first record monotonicity. At each vertex, sample an axis, a preferred
direction on that axis, and an independent uniform variable $U_x\in[0,1]$.
At parameter $t$, keep the preferred arrow open and add the opposite arrow
when $U_x\leq t$. This coupling has the required law and only adds arrows as
$t$ increases. Thus $\theta$ is nondecreasing; once the probability in
Theorem~\ref{thm:main} is positive, its assertion about $\eps_c$ follows.

For a nonempty set $R_x$ of directions, consider the event that every outgoing
arrow in $R_x$ is closed. Write $r=1/2+\delta/4$. The six possible states at a
vertex give the following probabilities.

\begin{table}[ht]
\centering
\renewcommand{\arraystretch}{1.2}
\begin{tabular}{lccccc}
\toprule
Requirement & One & Two opposite & Two adjacent & Three & Four\\
\midrule
Probability & $r$ & $1/2$ & $\delta/2$ & $\delta/4$ & $0$\\
\bottomrule
\end{tabular}
\caption{Exact costs of closing a prescribed set of outgoing arrows.}
\label{tab:costs}
\end{table}

Indeed, each one-arrow state has probability $\delta/4$, and each two-arrow
stick has probability $(1-\delta)/2$. A prescribed arrow is closed if the
perpendicular axis is chosen, or if the opposite one-arrow state is chosen.
Two opposite arrows are closed precisely when the perpendicular axis is
chosen. Two adjacent arrows leave two admissible one-arrow states, while
three prescribed arrows leave only one. Closing all four is impossible.
Requirements at different vertices are independent.

Let $(\Z^2)^*=\Z^2+(1/2,1/2)$ be the dual lattice. For a finite, nonempty
nearest-neighbor connected set $S\subset\Z^2$, let $U_\infty(S)$ be the
infinite component of $\Z^2\setminus S$, and let
$\widehat S=\Z^2\setminus U_\infty(S)$ be its filled hull. Its external edge
boundary is
\[
B(S)=\bigl\{\{x,y\}:x\in\widehat S,\ y\in U_\infty(S),\ |x-y|_1=1\bigr\}.
\]

\begin{lemma}[Outer boundary]\label{lem:boundary}
The hull $\widehat S$ is finite and connected. Every edge of $B(S)$ has its
hull endpoint in $S$, and the dual edges $B(S)^*$ form one simple circuit
surrounding $\widehat S$.
\end{lemma}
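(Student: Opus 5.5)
The proof splits into three parts: finiteness and connectivity of the hull, the location of the boundary edges, and the simple-circuit property.

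\textbf{Finiteness and connectivity of the hull.} Since $S$ is finite, it lies in a box $Q=[-n,n]^2$. The complement of $Q$ is connected, infinite and disjoint from $S$, so it lies in $U_\infty(S)$. Hence $\widehat S\subset Q$ is finite. For connectivity, note that $S\subset\widehat S$ is connected, so it suffices to join any $x\in\widehat S\setminus S$ to $S$ inside $\widehat S$. Such an $x$ lies in a finite component $F$ of $\Z^2\setminus S$. Every vertex of $F$ is in $\widehat S$, because $F\neq U_\infty(S)$. Since $F$ is finite, some vertex of $F$ has a neighbour outside $F$. That neighbour is not in $\Z^2\setminus S$ (otherwise $F$ would not be a whole component), so it lies in $S$. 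Thus $x$ is joined to $S$ through $F\subset\widehat S$.

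\textbf{Hull endpoints lie in $S$.} Let $\{x,y\}\in B(S)$ with $x\in\widehat S$ and $y\in U_\infty(S)$. If $x\notin S$, then $x$ and $y$ are adjacent vertices of $\Z^2\setminus S$, so they lie in the same component. That forces $x\in U_\infty(S)$, a contradiction. Hence $x\in S$.

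\textbf{$B(S)^*$ is one simple circuit.} Let $K$ be the union of the closed unit squares $[x-\tfrac12,x+\tfrac12]^2$ centred at $x\in\widehat S$. The dual edges in $B(S)^*$ are exactly the unit segments lying between a square of $K$ and a square centred in $U_\infty(S)$. Two facts are needed.
\begin{enumerate}
\item \emph{Every dual vertex has even degree $0$, $2$ or $4$ in $B(S)^*$.} Around a dual vertex there are four primal vertices. The number of dual edges of $B(S)^*$ at that vertex equals the number of alternations between $\widehat S$ and $U_\infty(S)$ in this cyclic order of four, which is even. Degree $4$ would need the diagonal pattern: $x,x'\in\widehat S$ diagonal and $y,y'\in U_\infty(S)$ diagonal. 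Since $U_\infty(S)$ is connected, $y$ and $y'$ are joined by a path in $U_\infty(S)$. Together with the dual vertex, this path forms a closed curve separating $x$ from $x'$ or from infinity. Since $\widehat S$ is connected and avoids $U_\infty(S)$, $x$ and $x'$ are joined by a path inside $\widehat S$, which this curve cannot let pass. This is the planar separation step, and it is the step I expect to be the main obstacle. I would handle it with a Jordan curve argument in the plane: take the polygonal loop through $y$, the path to $y'$, and the dual vertex, and use the fact that $\widehat S$ has no holes (its complement $U_\infty(S)$ is connected) to derive a contradiction.
\item \emph{$B(S)^*$ is connected.} Using that $\Z^2\setminus\widehat S=U_\infty(S)$ is connected and $\widehat S$ is connected, the topological boundary of $K$ is connected. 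This is the standard fact that a simply connected finite polyomino has a single boundary component; it follows from the Jordan curve theorem, or from Alexander duality, since $\mathbb R^2\setminus K$ is connected.
\end{enumerate}
Once every vertex has degree $0$ or $2$ and the nonzero-degree part is connected, $B(S)^*$ is a single simple cycle. It surrounds $\widehat S$ because it is the boundary of $K$, which contains $\widehat S$.
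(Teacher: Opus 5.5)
Your first three parts (finiteness via a box, connectivity through finite complementary components that must touch $S$, and hull endpoints in $S$) are the same as the paper's. For the circuit property you take a genuinely different route, and it is correct.

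\textbf{Your route.} You exclude degree-$4$ dual vertices by a local Jordan-curve argument. You then get connectedness of $B(S)^*$ from the global fact that $\partial K$ is connected when both $K$ and $\mathbb R^2\setminus K$ are connected (unicoherence of the plane, or Mayer--Vietoris), which you quote rather than prove. Two points should be made explicit:
\begin{enumerate}
\item The closed curve built from the $U_\infty(S)$-path $y\to y'$ and the two half-diagonals separates $x$ from $x'$, because the other diagonal meets it exactly once and transversally. Separation of $x$ from infinity alone gives no contradiction.
\item Nearest-neighbour connectedness of $U_\infty(S)$ passes to $\mathbb R^2\setminus K$, because the open common side of two adjacent outside squares avoids $K$.
\end{enumerate}

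\textbf{The paper's route.} It needs neither input. Connectivity of $\widehat S$ and $U_\infty(S)$ makes $B(S)$ a minimal edge cut. The even-degree dual set contains some simple cycle $\gamma$, and the edges crossed by $\gamma$ form a nonempty cut by the Jordan curve theorem. Minimality then forces these edges to be all of $B(S)$, so $B(S)^*=\gamma$. That one step gives degree $2$ everywhere and connectedness at once, using only Jordan for a lattice polygon plus elementary graph theory.

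\textbf{Comparison.} Your route identifies the contour explicitly with $\partial K$, which makes the ``surrounds $\widehat S$'' clause immediate. The cost is two separate topological facts, one of them cited rather than proved.
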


\begin{proof}
Choose a box containing $S$ in its interior. All vertices outside the box
belong to one infinite complementary component, so this component is unique
and the hull is finite. Every finite complementary component has a
nearest-neighbor edge to $S$; adjoining these components preserves
connectedness. A vertex of $\widehat S\setminus S$ cannot be adjacent to
$U_\infty(S)$, since both would then belong to the same component of
$\Z^2\setminus S$.

Both $\widehat S$ and $U_\infty(S)$ are connected. Therefore $B(S)$ is a
minimal nonempty edge cut: restoring any one of its edges reconnects the
lattice. At every dual vertex, the number of incident dual boundary edges
is even, as membership in the hull changes an even number of times around
a primal square. Hence the finite dual boundary contains a simple cycle
$\gamma$.

The primal edges crossed by $\gamma$ form a nonempty edge cut by the Jordan
curve theorem. They are a subset of $B(S)$, so minimality forces equality.
Thus the whole dual boundary is the simple cycle $\gamma$. The connected
infinite complementary set lies outside it. Every crossed primal edge has
endpoints on opposite sides, so the connected hull lies inside it.
\end{proof}

\section{Turning geometry and contour weights}

Orient every simple dual circuit $\Gamma$ counterclockwise, with its bounded
side on the left. Let $\ell$ be its length, and let $c$ and $k$ be its numbers
of left and right turns. We call them convex and concave turns. The signed
turning angle gives
\begin{equation}\label{eq:turning}
c-k=4.
\end{equation}

For an inside primal vertex $x$, let $R_x(\Gamma)$ contain the directions of
the primal edges leaving $x$ and crossed by $\Gamma$. Let $E_\Gamma$ be the
event that all these arrows are closed. Classify the nonempty requirements
by their shapes: one direction, two opposite directions, two adjacent
directions, or three directions. Denote the corresponding vertex counts by
$n_1,n_o,n_a,n_3$.

\begin{lemma}[Boundary bookkeeping]\label{lem:bookkeeping}
If $\PP_\eps(E_\Gamma)>0$, then
\begin{equation}\label{eq:bookkeeping}
\ell=n_1+2n_o+2n_a+3n_3,
\qquad c=n_a+2n_3.
\end{equation}
The indexed cyclic turn word of $\Gamma$, with letters $L,S,R$ for left,
straight and right, contains no cyclic occurrence of $LLL$. If $m$ is its
number of cyclic occurrences of $LL$, then $m=n_3$.
\end{lemma}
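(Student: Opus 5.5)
The plan is to charge each edge and each convex turn of $\Gamma$ to an inside primal vertex, the unit square of the primal lattice whose corners are adjacent to the relevant dual edges.

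\textbf{Edge count.} Every dual edge of $\Gamma$ crosses exactly one primal edge $\{x,y\}$ with $x$ inside and $y$ outside, since $\Gamma$ is simple and separates the plane. That crossing contributes the direction $y-x$ to $R_x(\Gamma)$. Distinct dual edges cross distinct primal edges, and the edges leaving a fixed $x$ have distinct directions. So $\ell=\sum_x|R_x(\Gamma)|$. Splitting this sum by the shape of $R_x$ gives $\ell=n_1+2n_o+2n_a+3n_3$. The hypothesis $\PP_\eps(E_\Gamma)>0$ enters only through Table~\ref{tab:costs}: it rules out $|R_x|=4$. Such a vertex would be an isolated inside vertex, and it cannot occur here anyway.

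\textbf{Convex turns.} At a left turn at a dual vertex $v$, the two consecutive dual edges bound a quarter of the primal square centred at $v$. With the bounded side on the left, the inside corner $x$ of that square is the primal vertex to the left of both edges. The two crossed primal edges both leave $x$, in perpendicular directions. Conversely, suppose $x$ has two adjacent directions $d_1\perp d_2$ in $R_x$. Then the dual edges crossing $\{x,x+d_1\}$ and $\{x,x+d_2\}$ share the dual vertex $x+(d_1+d_2)/2$. I will argue that they are consecutive in $\Gamma$ with a left turn. At that dual vertex the square has $x$ inside and $x+d_1,x+d_2$ outside. Its fourth corner $x+d_1+d_2$ is then either outside, so the vertex has degree $2$ and the turn is convex, or inside, so the vertex has degree $4$. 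The degree-$4$ case is impossible because $\Gamma$ is a simple circuit.

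This gives a bijection between left turns and unordered adjacent pairs in the sets $R_x$. An adjacent-pair requirement contains one such pair. A three-direction requirement contains two, for example $\{N,E,S\}$ contains $NE$ and $ES$, while $NS$ is opposite. A one-direction or opposite requirement contains none. Summing gives $c=n_a+2n_3$.

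\textbf{Turn word.} Fix $x$ and walk around it. A cyclic occurrence of $LL$ in $\Gamma$ is two consecutive left turns around the same inside vertex $x$, since consecutive left turns keep the same vertex on the left. Such an occurrence consumes three consecutive directions of $R_x$. So each $LL$ corresponds to an $x$ with $|R_x|\ge3$, and this $x$ has exactly $|R_x|=3$ by the exclusion above. Conversely, a three-direction $x$ yields exactly one $LL$, made of its two convex turns, which are consecutive because the middle edge is shared. Hence $m=n_3$. An occurrence of $LLL$ would make the path go around all four sides of $x$, forcing $|R_x|=4$. Equivalently, $\Gamma$ would close up as the unit square around $x$, and that square has $|R_x|=4$, which gives probability $0$.

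\textbf{Main obstacle.} The hardest step is the degree-$4$ exclusion at a dual vertex. I also need to check carefully that the left side of each edge is the inside primal endpoint. I would handle both by the local picture of a primal square, using Lemma~\ref{lem:boundary}: $B(S)^*$ is exactly a simple circuit, with the hull on the left.
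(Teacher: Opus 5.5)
Your proposal is correct and follows essentially the same route as the paper. You partition the edges of $\Gamma$ by their inside primal endpoints, match each convex turn with a perpendicular pair of directions in a single $R_x$, and identify $LL$ and $LLL$ with three and four consecutive sides of the dual square around $x$, excluding $LLL$ by the zero cost of four closed arrows. Your fourth-corner check at the shared dual vertex just makes explicit the paper's claim that two adjacent requirements give one convex turn. Simplicity of $\Gamma$ alone gives degree at most two there, so no appeal to Lemma~\ref{lem:boundary} is needed.
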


\begin{proof}
The dual edges crossing the primal edges incident to $x$ are the four sides
of the dual unit square centered at $x$. Partitioning contour edges by their
inside primal endpoints gives the first identity; the zero cost of four
closed arrows excludes that case.

At a convex turn, the two edges are adjacent sides of one such square and
share its center as their inside endpoint. Conversely, two adjacent
requirements use two consecutive sides of that square, and three
requirements use three consecutive sides. They contribute one and two
convex turns, respectively. The remaining requirement types contribute
none, proving the second identity.

Two consecutive left turns traverse three sides of one dual unit square and
hence correspond exactly to a three-arrow requirement. Three consecutive
left turns would complete that square and return to the initial dual vertex.
Simplicity then forces the entire circuit to be this square, whose boundary
event has probability zero. This case was excluded.
\end{proof}

Put $\alpha=1/\sqrt2$ and assume $0<\delta\leq1$ and $r\leq\alpha$.
For contours with positive boundary-event probability, independence and
the local costs give
\begin{equation}\label{eq:weight}
\PP_\eps(E_\Gamma)
=r^{n_1}\left(\frac12\right)^{n_o}
 \left(\frac\delta2\right)^{n_a}\left(\frac\delta4\right)^{n_3}
\leq\alpha^{\ell+m}\delta^{c-m}.
\end{equation}
Using $1/2=\alpha^2$ and $1/4=\alpha^4$, we bound the product by
$\alpha^{n_1+2n_o+2n_a+4n_3}\delta^{n_a+n_3}$.
Lemma~\ref{lem:bookkeeping} now gives the stated bound. The inequality also
holds for zero-probability contours.

Fix $z>0$. Since $c-k=4$, we may rewrite this bound as
\begin{equation}\label{eq:tilt}
\PP_\eps(E_\Gamma)\leq z^{-4}W(\Gamma),
\qquad W(\Gamma)=z^{c-k}\alpha^{\ell+m}\delta^{c-m}.
\end{equation}
The parameter $z$ allows us to sum over an enlarged collection of turn
words without imposing the constraint $c-k=4$ on every word. Three states
keep track of consecutive left turns.

\section{A three-state contour sum}

For an indexed cyclic word $w\in\{L,S,R\}^\ell$, let $c(w)$ and $k(w)$
count its left and right turns, and let $m(w)$ count cyclic occurrences
of $LL$. Give it the weight
\[
W(w)=z^{c(w)-k(w)}\alpha^{\ell+m(w)}\delta^{c(w)-m(w)}.
\]
We sum over words with no cyclic occurrence of $LLL$. The word positions
remain indexed: words are not identified under cyclic shifts.

Use states $0,1,2$ for the length of the current terminal run of left turns.
State $0$ means that the current letter is not $L$. Introduce the matrix
\begin{equation}\label{eq:matrix}
T=T_{\delta,z}=
\begin{pmatrix}
\alpha(1+z^{-1})&\alpha\delta z&0\\
\alpha(1+z^{-1})&0&\alpha^2z\\
\alpha(1+z^{-1})&0&0
\end{pmatrix}.
\end{equation}

\begin{lemma}[Cyclic word sum]\label{lem:words}
For every $\ell\geq4$,
\begin{equation}\label{eq:trace}
\sum_{\substack{w\in\{L,S,R\}^{\ell}\\
                 w\text{ has no cyclic }LLL}}W(w)=\tr(T^\ell).
\end{equation}
\end{lemma}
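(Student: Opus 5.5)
The plan is to read $T$ as a transfer matrix over the cyclic word, with the state at position $i$ equal to the length of the maximal run of $L$'s ending at position $i$, read cyclically. Then $\tr(T^\ell)$ becomes a sum over closed walks $s_1\to s_2\to\cdots\to s_\ell\to s_1$, and I will show these walks are in weight-preserving correspondence with the admissible words.

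\emph{Step 1: factor the weight over positions.} For $w=w_1\cdots w_\ell$ (indices mod $\ell$) I will write $W(w)=\prod_{i=1}^{\ell}\phi(w_{i-1},w_i)$. Each letter contributes $\alpha$. An $R$ contributes $z^{-1}$, an $S$ contributes $1$, and an $L$ contributes $z$. In addition, an $L$ whose predecessor is not $L$ contributes $\delta$, and an $L$ whose predecessor is $L$ contributes an extra $\alpha$. Counting each cyclic occurrence of $LL$ at its second letter, the number of $L$'s preceded by $L$ is exactly $m(w)$, and the number of $L$'s not preceded by $L$ is $c(w)-m(w)$. Multiplying out therefore gives $z^{c-k}\alpha^{\ell+m}\delta^{c-m}$. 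This bookkeeping needs $\ell\ge 2$, so that the predecessor $w_{i-1}$ is a genuine distinct cyclic neighbour; the hypothesis $\ell\ge4$ is more than enough.

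\emph{Step 2: words to state sequences.} Let $w$ have no cyclic $LLL$. Since $\ell\ge3$, the word $L^\ell$ contains a cyclic $LLL$, so $w$ has some non-$L$ letter. Hence the cyclic run length $s_i\in\{0,1,2\}$ of $L$'s ending at $i$ is well defined. The rules are forced: $s_i=0$ if $w_i\ne L$, and $s_i=s_{i-1}+1$ if $w_i=L$. The transitions that occur are therefore:
\begin{itemize}
\item $s\to0$, via $S$ or $R$;
\item $0\to1$ and $1\to2$, via $L$.
\end{itemize}
The transition $2\to3$ never occurs, since it would require a cyclic $LLL$. Comparing with Step 1:
\begin{itemize}
\item summing the letter factors over $\{S,R\}$ gives the first-column entries $\alpha(1+z^{-1})$;
\item an $L$ after a non-$L$ gives $\alpha z\delta=T_{01}$;
\item an $L$ after an $L$ gives $\alpha^2 z=T_{12}$.
\end{itemize}
So each admissible word yields a closed walk. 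Grouping words by their state sequence, $\sum W(w)$ equals the sum over state sequences of $\prod_i T_{s_{i-1}s_i}$.

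\emph{Step 3: state sequences to words.} Conversely, take a closed walk with all factors nonzero. It cannot stay in $\{1,2\}$ forever, because every nonzero transition into $\{1,2\}$ raises the state by one. So it visits $0$. Put $L$ at positions with $s_i\ge1$ and a free choice of $S$ or $R$ at positions with $s_i=0$. The forced rules show that $s_i$ is then exactly the cyclic run length, so the word has no cyclic $LLL$. This inverts the map of Step 2 fibrewise, and the two choices at each $0$-position account for the factor $1+z^{-1}$. Thus the correspondence is a bijection and $\sum W(w)=\sum_{s}\prod_i T_{s_{i-1}s_i}=\tr(T^\ell)$.

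I expect the only delicate point to be the cyclic consistency: checking that a closed walk really encodes the cyclic run lengths, including the run that wraps around from position $\ell$ to position $1$, and that $m$ counts wrap-around $LL$'s correctly. The factorization in Step 1, indexed by cyclic predecessor, handles both at once. That is where $\ell\ge 3$, which excludes the all-$L$ word, is used.
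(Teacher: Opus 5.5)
Your proof is correct and follows the same transfer-matrix argument as the paper: the states record the current run of $L$'s, the first column of $T$ encodes $S$ or $R$, the entries $\alpha\delta z$ and $\alpha^2 z$ encode a first and a second consecutive $L$, and expanding $\tr(T^\ell)$ over closed walks, with $\alpha(1+z^{-1})$ split into $S$ and $R$, recovers each admissible word exactly once. Your Step 3 shows that every nonzero closed walk visits state $0$ and decodes to a word with no cyclic $LLL$, which spells out the bijectivity that the paper asserts in a single line.
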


\begin{proof}
Every admissible word contains a non-$L$ letter, so it determines a unique
cyclic state sequence. A transition into state $0$ represents $S$, with
weight $\alpha$, or $R$, with weight $\alpha/z$. The transition $0\to1$
is a first left turn, with weight $\alpha\delta z$. The transition $1\to2$
is a second consecutive left turn. It receives weight $\alpha^2z$, which
supplies the extra factor $\alpha/\delta$ for one occurrence of $LL$.

These are exactly the entries of $T$. Expanding $\tr(T^\ell)$ counts closed
state walks with indexed positions. Expanding each factor
$\alpha(1+z^{-1})$ distinguishes $S$ from $R$. The resulting products are
precisely $W(w)$, with no omission or multiplicity.
\end{proof}

\begin{lemma}[Rooted contour bound]\label{lem:root}
If $r\leq\alpha$, then for every $\ell\geq4$,
\begin{equation}\label{eq:root}
\sum_{\substack{\Gamma\text{ simple dual circuit}\\
                 |\Gamma|=\ell,\ 0\text{ inside }\Gamma}}
\PP_\eps(E_\Gamma)
\leq\frac{\ell}{2z^4}\tr(T^\ell).
\end{equation}
\end{lemma}

\begin{proof}
Discard circuits whose boundary event has probability zero. Root each
remaining circuit at its nearest intersection with the positive horizontal
ray. This lies in a vertical dual edge at a positive half-integer coordinate
$u$. The segment from the origin to that intersection is inside the circuit,
so the counterclockwise orientation traverses the root edge upwards.

The circuit also meets the negative horizontal ray at some $-v<0$. Each of
the two arcs between these intersections has horizontal variation at least
$u+v$. Thus $\ell\geq2(u+v)>2u$, leaving at most $\ell/2$ possible root
edges. A fixed root edge and an indexed turn word determine the circuit
uniquely. By Lemma~\ref{lem:bookkeeping} its word has no $LLL$, and its
probability is at most $z^{-4}W(w)$. Summing over the possible roots and then
over all admissible words proves the bound. Words which do not form closed
simple circuits only enlarge the sum.
\end{proof}

\section{An explicit estimate at \texorpdfstring{$\eps=0.99$}{epsilon=0.99}}

Take $\delta=1/100$ and $z=11/2$. Here $r=201/400<\alpha$, since
$201^2<80000$. Direct expansion of the three-by-three determinant gives
\begin{equation}\label{eq:det}
P(t):=\det(I-tT)=1-b_1t-b_2t^2-b_3t^3,
\end{equation}
where
\begin{equation}\label{eq:coefficients}
b_1=\frac{13}{11\sqrt2},\qquad
b_2=\frac{13}{400},\qquad
b_3=\frac{143}{1600}.
\end{equation}
The inequality $3250^2<2\cdot2299^2$ gives $b_1<209/250$. Hence, writing
$d=1-b_1-b_2-b_3$, we have
\begin{equation}\label{eq:rational-bounds}
d>\frac{337}{8000}>\frac{21}{500},\qquad
b_1+4b_2+9b_3<\frac95,\qquad
b_1+2b_2+3b_3<\frac{117}{100}.
\end{equation}
For the last two comparisons, the respective upper bounds obtained by
substituting $209/250$ are $14163/8000$ and $9353/8000$.

Since $T$ is nonnegative, its spectral radius is a nonnegative eigenvalue.
Its characteristic polynomial is $\lambda^3-b_1\lambda^2-b_2\lambda-b_3$.
For $\lambda\geq1$ this polynomial is at least $\lambda^3d>0$, so
$\rho(T)<1$. The trace series and its derivative therefore converge in a
neighborhood of $t=1$. Logarithmic differentiation of $P$ gives
\[
-\frac{tP'(t)}{P(t)}=\sum_{\ell\geq1}\tr(T^\ell)t^\ell.
\]
Differentiating once more and setting $t=1$, we obtain the exact identity
\begin{equation}\label{eq:sum}
\begin{aligned}
\sum_{\ell\geq1}\ell\tr(T^\ell)
&=\frac{b_1+4b_2+9b_3}{d}
  +\frac{(b_1+2b_2+3b_3)^2}{d^2}\\
&<\frac{300}{7}+\left(\frac{195}{7}\right)^2
 =\frac{40125}{49}<820.
\end{aligned}
\end{equation}

\begin{proof}[Proof of Theorem~\ref{thm:main}]
If $\cl(0)$ is finite, it is nearest-neighbor connected and every outgoing
arrow to its complement is closed. Lemma~\ref{lem:boundary} gives a simple
outer dual circuit around the origin, with each crossed edge having its
inside endpoint in $\cl(0)$. Thus its boundary event occurs. A union bound,
Lemma~\ref{lem:root} and the preceding estimate give
\[
\PP_{99/100}\bigl(|\cl(0)|<\infty\bigr)
\leq\frac{1}{2z^4}\sum_{\ell\geq4}\ell\tr(T^\ell)
<\frac{8\cdot820}{14641}<\frac9{20}.
\]
We enlarged the sum to all $\ell\geq1$, and used $1/(2z^4)=8/14641$.
The final strict inequality follows from $131200<131769$. Therefore
$\theta(99/100)>11/20$. Monotonicity yields $\eps_c\leq99/100<1$.
\end{proof}

In the parameterization of~\cite{CHJK}, this reads
$p_c^{\mathrm{ns\text{-}ew}}\leq(1+0.99)/4=0.4975<1/2$.
This bound establishes percolation below the endpoint; no sharpness is claimed.

\paragraph{Acknowledgments.}
The author thanks Professor Hao Ge for his guidance and helpful comments on
this paper. Based on the author's ideas for the preceding estimates, GPT
assisted in selecting the parameter $z=11/2$. GPT also assisted with language
editing.


\begin{thebibliography}{9}

\bibitem{CHJK}
D.~Coupier, B.~Henry, B.~Jahnel and J.~K\"oppl.
\newblock \emph{The planar lattice two-neighbor graph percolates}.
\newblock arXiv:2412.20781 (2024).
\newblock \url{https://arxiv.org/abs/2412.20781}.

\bibitem{BJKLRT}
J.~B\"aumler, B.~Jahnel, J.~K\"oppl, B.~Lodewijks, L.~Reeves and A.~T\'obi\'as.
\newblock \emph{Local criteria for global connectivity comparisons:
beyond stochastic domination}.
\newblock arXiv:2510.03934v2 (2026).
\newblock \url{https://arxiv.org/abs/2510.03934v2}.

\end{thebibliography}
\end{document}